\documentclass{amsart}

\usepackage{amscd}
\usepackage{amssymb}
\usepackage{graphicx}
\usepackage{tikz}
\usepackage{color}

%
%
\usepackage{amsthm}



\newtheorem{thm}[equation]{Theorem}

\newtheorem{prop}[equation]{Proposition}

\newtheorem*{thm*}{Theorem}
\newtheorem*{prop*}{Proposition}
\newtheorem*{cor*}{Corollary}
\newtheorem*{lem*}{Lemma}
\newtheorem*{MT*}{Main Theorem}
\newtheorem{ques}[equation]{Question}
\newtheorem*{ques*}{Question}


\theoremstyle{definition} %

\newtheorem*{defn*}{Definition}

\newtheorem{eg}[equation]{Example}

\theoremstyle{remark} %

\newtheorem*{rmk*}{Remark}
\newtheorem*{rmks*}{Remarks}

%
%
\newtheoremstyle{exercise}
  {3pt}
  {3pt}
  {\small}
  {}
  {\sc\small}
  {.}
  {.5em}
   {}     
  {}

\theoremstyle{exercise}

%
  
%
%

%
%
%
\makeatletter
{\renewcommand{\theequation}{#1}}%
{\renewcommand{\theequation}{\arabic{equation}}\addtocounter{equation}{-1}\global\@ignoretrue}
\makeatother

\makeatletter
{\renewcommand{\theequation}{#1}\begin{eqnarray}}%
{\end{eqnarray}\renewcommand{\theequation}{\arabic{equation}}\addtocounter{equation}{-1}\global\@ignoretrue}
\makeatother

%
%
\makeatletter
{\smallskip \refstepcounter{equation}\noindent{\textbf{\theequation.} }{{\textbf{#1.}}}}%
{\smallskip \global\@ignoretrue}
\makeatother

%
%
\makeatletter
{\smallskip \refstepcounter{equation}\noindent{\textbf{\theequation.} }{{\textbf{#1}}}}%
{\smallskip \global\@ignoretrue}
\makeatother

%
%
\makeatletter
{\smallskip \refstepcounter{equation}{\sc \theequation}{\sc (#1).}}%
{\smallskip \global\@ignoretrue}
\makeatother

%
%
\makeatletter
{\smallskip \refstepcounter{equation}\noindent{\sc \theequation.}{\sl{ #1.}}}%
{\smallskip \global\@ignoretrue}
\makeatother

%
%
\makeatletter
\newenvironment{borel*}%
{\smallskip \refstepcounter{equation}\noindent{\textbf{\theequation.}}}%
{\global\@ignoretrue}
\makeatother

%
%
\newcommand{\flist}[1]{\hangindent\leftmargini\textup{(1)}\hskip\labelsep {#1}%
\begin{enumerate}%
\setcounter{enumi}{1}%
}

\newcommand{\ot}{\otimes}

\setlength{\unitlength}{.4in}

\DeclareMathOperator{\Lie}{Lie}


%
%
\newcommand{\C}{{\mathbb{C}}}        
\newcommand{\Q}{{\mathbb{Q}}}        
\newcommand{\R}{{\mathbb{R}}}        
\newcommand{\F}{{\mathbb{F}}}        
\newcommand{\Z}{{\mathbb{Z}}}        

\newcommand{\e}{\varepsilon}

\newcommand{\la}{\lambda}
\newcommand{\D}{\Delta}

\newcommand{\G}{{\Gamma}}       


\newcommand{\oddots}{{\mathinner{\mkern1mu\raise1pt\vbox{\kern7pt\hbox{.}}\mkern2mu\raise4pt\hbox{.}\mkern2mu\raise7pt\hbox{.}\mkern1mu}}}

%
%
%
%

%
%

%
%

%
%

%
%


\newcommand{\qform}[1]{{\left\langle{#1}\right\rangle}}                   


%
%


\DeclareMathOperator{\Spin}{Spin}           
\DeclareMathOperator{\Sp}{Sp}

\DeclareMathOperator{\SL}{SL}
\DeclareMathOperator{\GL}{GL}
\DeclareMathOperator{\PGL}{PGL}

\newcommand{\SO}{\mathrm{SO}}

\newcommand{\so}{\mathfrak{so}}

\newcommand{\Gm}{\mathbb{G}_m}

%
%

\DeclareMathOperator{\Tr}{Tr}

\DeclareMathOperator{\ed}{ed}

\DeclareMathOperator{\car}{char}

%
%

%
%

\DeclareMathOperator{\Aut}{Aut}

\newcommand{\Hom}{{\mathrm{Hom}}}

%
%

%
%

\newcommand{\injects}{\hookrightarrow}

%
%


%
%

\renewcommand{\e}{\mathfrak{e}}
\newcommand{\f}{\mathfrak{f}}
\newcommand{\gl}{\mathfrak{gl}}

\DeclareMathOperator{\Inv}{Inv}

\DeclareMathOperator{\PSL}{PSL}

\renewcommand{\sl}{\mathfrak{sl}}

\newcommand{\Sets}{\mathsf{Sets}}

\usepackage[all]{xy}

\usepackage[hyphenbreaks]{breakurl}

\numberwithin{equation}{section}

\newcommand{\g}{\mathfrak{g}}
\DeclareMathOperator{\rad}{rad}

\newcommand{\Gad}{G^{\mathrm{ad}}}

\newcommand{\E}{\mathsf{E}}

\renewcommand{\sc}{\mathrm{sc}}

\begin{document}

\title{$E_8$, the most exceptional group}
\author{Skip Garibaldi}
\address{Center for Communications Research, San Diego, California 92121}
\email{skip \text{at} member.ams.org}
\subjclass[2010]{Primary 20G41; Secondary 17B25, 20G15}
\begin{abstract}
The five exceptional simple Lie algebras over the complex number are included one within the other as $\g_2 \subset \f_4 \subset \e_6 \subset \e_7 \subset \e_8$.  The biggest one, $\e_8$, is in many ways the most mysterious.  This article surveys what is known about it including many recent results, focusing on the point of view of Lie algebras and algebraic groups over fields.
\end{abstract}


\maketitle

\setcounter{tocdepth}{1}
\tableofcontents

\section{Introduction}

The Lie algebra $\e_8$ or Lie group $E_8$ was first sighted by a human being sometime in summer or early fall of 1887, by Wilhelm Killing as part of his program to classify the semisimple finite-dimensional Lie algebras over the complex numbers \cite[pp.~162--163]{Hawkins}.  Since then, it has been a source of fascination for mathematicians and others in its role as the largest of the exceptional Lie algebras. (It appears, for example, as part of the fictional Beard-Einstein Conflation in the prize-winning novel \emph{Solar} \cite{Solar}.)  This article will discuss some of the many new mathematical discoveries that have been obtained over the last few years and point out some of the things that remain unanswered.

Killing's classification is now considered the core of a typical graduate course on Lie algebras, and the paper containing the key ideas, \cite{Killing2}, has even been called ``the greatest mathematical paper of all time''\footnote{Any such delcaration is obviously made with an intent to provoke.  One objection I have heard to this particular nomination is that Killing made a serious mathematical error --- made famous by Cartan's quote ``Malheureusement les recherches de M.~Killing manquent de rigueur, et notamment, en ce qui concerne les 
groupes qui ne sont pas simples, il fait constamment usage d'un th\'eor\`eme qu'il ne d\'emontre pas dans sa g\'en\'eralit\'e'' \cite[p.~9]{Ca:th} --- but this error is from a different paper and is irrelevant to the current discussion.  See \cite{Hawkins} for details.}, see \cite{Coleman:greatest} and \cite{Helgason:ex}.
Killing showed that the simple Lie algebras make up four infinite families together with just five\footnote{Since Killing's paper contains so many new and important ideas, produced by someone working in near-isolation, it feels churlish to mention that it actually claims that there are \emph{six} exceptional ones.   Killing failed to notice that two of the ones in his list were isomorphic.} others, called  \emph{exceptional}.  They are ordered by inclusion and $\e_8$ is the largest of them.  When someone says ``$E_8$'' today, they might be referring to the simple Lie algebra $\e_8$, an algebraic group or Lie group, a specific rank 8 lattice, or a collection of 240 points in $\R^8$ (the  root system $\E_8$). 
This article gives an introduction to these objects as well as a survey of some of the many results on $E_8$ that have been discovered since the millennium and indications of the current frontiers.

There are many reasons to be interested in $E_8$.  Here is a pragmatic one.  The famous Hasse-Minkowski Principle for quadratic forms and the Albert-Brauer-Hasse-Noether Theorem for division algebras over number fields can be viewed as special cases of the more general Hasse Principle for semisimple algebraic groups described and proved in \cite[Ch.~6]{PlatRap}.  This is a powerful theorem, which subsumes not just those famous results but also local-global statements for objects that don't have ready descriptions in elementary language.  Its proof involves case-by-case considerations, with the case of $E_8$ being the hardest one, eventually proved by Chernousov in \cite{Ch:hasse}.  
That $E_8$ was hardest is typical.  In the words of \cite{Leeuwen:Vogan}: ``Exceptional groups, and in particular $E_8$, appear to have a more dense and complicated structure than classical ones, making computational problems more challenging for them.... Thus $E_8$ serves as a `gold standard': to judge the effectiveness of the implementation of a general algorithm, one looks to how it performs for $E_8$.''

I have heard this sentiment expressed more strongly as: We understand semisimple algebraic groups only as far as we understand $E_8$.  If we know some statement for all groups except $E_8$, then we do not really know it.

Here is a more whimsical reason to be interested in $E_8$: readers who in their youth found quaternions and octonions interesting will naturally gravitate to the exceptional Lie algebras and among them the largest one, $\e_8$.  Indeed, one might call $E_8$ \emph{the Monster of Lie theory}, because it is the largest of the exceptional groups, just as the Monster is the largest of the sporadic finite simple groups.  Another take on this is that ``$E_8$ is the most non-commutative of all simple Lie groups'' \cite{Lusztig:E8}.

\subsection*{Remarks on exceptional Lie algebras}
The exceptional\footnote{We are following the usual definition of ``exceptional'', but there are alternatives that are appealing.  For example, \cite{DeligneGross} considers a longer chain of inclusions
\[
\sl_2 \subset \sl_3 \subset \g_2 \subset \so_8 \subset \f_4 \subset \e_6 \subset \e_7 \subset \e_8
\]
such that, at the level of simply connected Lie groups, all of the inclusions are unique up to conjugacy.  One could alternatively define all the Lie algebras appearing in that chain to be exceptional.}
 Lie algebras mentioned above form a chain 
\[
\g_2 \subset \f_4 \subset \e_6 \subset \e_7 \subset \e_8.
\]
There are various surveys and references focusing on some or all of these, such as \cite{Baez}, \cite{FF}, \cite{Adams:ex}, \cite{Jac:ex}, \cite{KMRT}, \cite{Frd:OAO}, and \cite{Sp:ex}.  Certainly, the greatest amount of material is available for $\g_2$, corresponding to the octonions, where one can find discussions as accessible as \cite{Numbers}.  Moving up the chain, less is available and what exists is somewhat less accessible.  This note takes the approach of jumping all the way to $\e_8$ at the end, because it is the case with the least existing exposition, or, to say the same thing differently, the most opportunity.

Another feature of exceptional groups is that, by necessity, various ad hoc constructions are often employed in order to study one group or another. These can appear inexplicable at first blush.  As much as we can, we will explain how these peculiar construction arise naturally from the general theory of semisimple Lie algebras and groups.  See \S\ref{gradings} for an illustration of this.

\section{What is $E_8$?} \label{whatis}

To explain what Killing was doing, suppose you want to classify Lie groups, meaning for the purpose of this section, a smooth complex manifold $G$ that is also a group and that the two structures are compatible in the sense that multiplication $G \times G \to G$ and inversion $G \to G$ are smooth maps.  The tangent space to $G$ at the identity, $\g$, is a vector space on which $G$ acts by conjugation, and this action gives a bilinear map $\g \times \g \to \g$ denoted $(x,y) \mapsto [xy]$ such that $[xx] = 0$ and $[x[yz]] + [y[zx]] + [z[xy]] = 0$ for all $x, y, z \in \g$.  The vector space $\g$ endowed with this ``bracket'' operation is called the \emph{Lie algebra} of $G$.  Certainly, if two Lie groups are isomorphic, then so are their Lie algebras; the converse, or something like it, also holds and is known as ``Lie's third theorem''.

Consequently, for understanding Lie groups, it is natural to classify the finite-dimensional Lie algebras over the complex numbers, which was Killing's goal.  Suppose $L$ is such a thing.  Then, among the ideals $I$ such that $[II] \subsetneq I$, there is a unique maximal one, called the radical of $L$ and denoted $\rad L$.   The possible such ideals $\rad L$ are unclassifiable if $\dim(\rad L)$ is large enough, see \cite{BLS}, so we ignore it; replacing $L$ by $L/\rad L$ we may assume that $\rad L = 0$.  Then $L$ is a direct sum of simple Lie algebras, which we now describe how to analyze.  

\subsection*{The root system} To classify the simple Lie algebras, Killing observed that we can extract from a simple $L$ a piece of finite combinatorial data called a \emph{root system}, and he classified the possible root systems.  The simple root system called $\E_8$ is a 240-element subset $R$ of $\R^8$, whose elements are called \emph{roots}.  It consists of the short vectors in the lattice $Q$ generated by the \emph{simple roots} 
\begin{gather*}
\alpha_1 = \frac12 (e_1 - e_2 - e_3 - e_4 - e_5 - e_6 - e_7 + e_8), \quad \alpha_2 = e_1 + e_2, \quad \alpha_3 = -e_1 + e_2, \\
 \alpha_4 = -e_2 + e_3, \quad \alpha_5  = -e_3 + e_4, \quad \alpha_6 = -e_4 + e_5, \\
 \alpha_7 = -e_5 + e_6, \quad \alpha_8 = -e_6 + e_7
\end{gather*}
where $e_i$ denotes the $i$-th element of an orthonormal basis of $\R^8$.  
This information is encoded in the graph
\setlength{\unitlength}{0.05cm}
\newsavebox{\Eviii}
\savebox{\Eviii}(90,21){\begin{picture}(90,21)
\put(0,6){\circle*{3}}
\put(0,6){\line(1,0){15}}
\put(15,6){\circle*{3}}
\put(15,6){\line(1,0){15}}
\put(30,6){\circle*{3}}
\put(30,6){\line(1,0){15}}
\put(45,6){\circle*{3}}
\put(30,6){\line(0,1){15}} 
\put(60,6){\circle*{3}}
\put(30,21){\circle*{3}}
\put(75,6){\circle*{3}}
\put(75,6){\line(1,0){15}}
\put(90,6){\circle*{3}}
\put(45,6){\line(1,0){15}}
\put(60,6){\line(1,0){15}}
\end{picture}}
\begin{equation} \label{dynkin}
\begin{picture}(90,21)(0,0)
 \put(0,0){\usebox{\Eviii}} 
\put(-2,0){\mbox{\small $\alpha_1$}}
\put(13,0){\mbox{\small $\alpha_3$}}
\put(28,0){\mbox{\small $\alpha_4$}}
\put(43,0){\mbox{\small $\alpha_5$}}
\put(58.5,0){\mbox{\small $\alpha_6$}}
\put(73,0){\mbox{\small $\alpha_7$}}
\put(88,0){\mbox{\small $\alpha_8$}}
\put(32.5,20){\mbox{\small $\alpha_2$}}
\end{picture}
\end{equation}
where vertices correspond to simple roots, a single bond joining $\alpha_i$ to $\alpha_j$ indicates that $\alpha_i \cdot \alpha_j = -1$, and no bond indicates that $\alpha_i \cdot \alpha_j = 0$.   Such a graph is called a \emph{Dynkin diagram}, and the root system is commonly described in this language today, but they were not invented for more than 50 years after Killing's paper.  The graph determines the root system up to isomorphism.  Evidently the particular embedding of $R$ in $\R^8$ and the labeling of the simple roots is not at all unique; here we have followed \cite{Bou:g4}.  One could equally well draw it as suggested in \cite{Lusztig:E8}, with the edges lying on along the edges of a cube as in Figure \ref{dynkin.cube}.

\begin{figure}
\begin{tikzpicture}
\draw [thick/.style={line width=2pt}, 
       line cap=round]                
 (0,0) edge [thick]  (2,0)
 (2,0) edge [dashed] (2,2)
 (2,2) edge [thick]  (0,2)
 (0,2) edge [thick]  (0,0)

 (1,1) edge [thick]  (3,1)
 (3,1) edge [dashed] (3,3)
 (3,3) edge [dashed] (1,3)
 (1,3) edge [dashed] (1,1)

 (0,0) edge [dashed] (1,1)
 (2,0) edge [thick]  (3,1)
 (2,2) edge [thick]  (3,3)
 (0,2) edge [thick]  (1,3) 
; 
\end{tikzpicture}
\caption{Dynkin diagram of $E_8$ obtained by deleting 5 edges from a cube in $\R^3$} \label{dynkin.cube}
\end{figure}
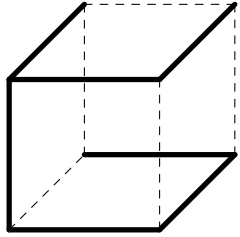

\begin{figure}[bht]
\includegraphics[width=0.42\textwidth]{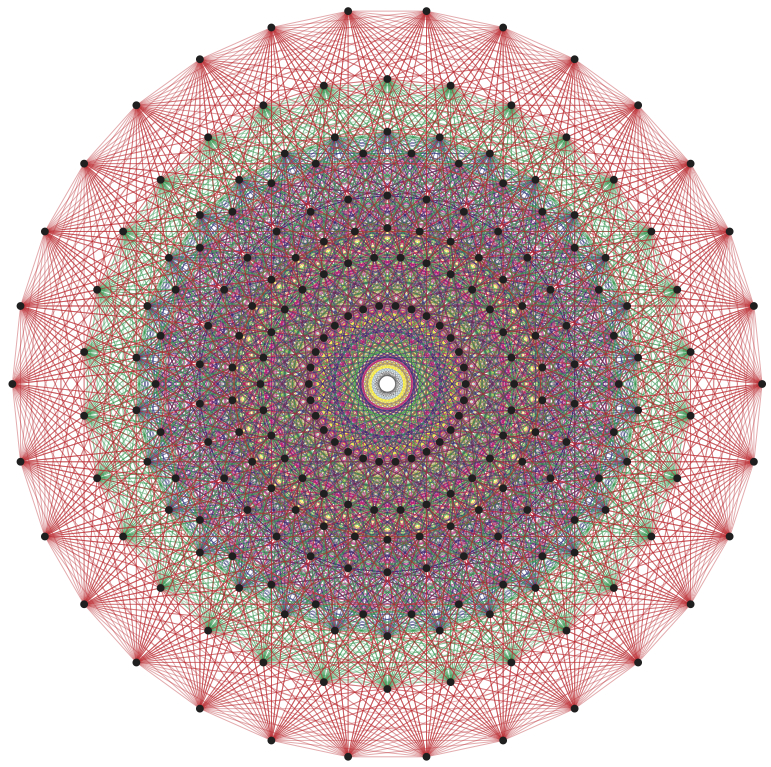}
\caption{$E_8$'s publicity photo, courtesy of J.~Stembridge} \label{publicity}
\end{figure}

We can draw a picture of the full set $R$ by projecting $\R^8$ onto a well-chosen plane\footnote{Precisely: take a Coxeter element $w$ in the Weyl group of $R$.  It is unique up to conjugacy and its minimal polynomial has a simple factor $x^2 - 2\cos(2\pi/h) + 1$, for $h = 30$ the Coxeter number for the root system $\E_8$.  That is, there is a unique plane in $\R^8$ on which the projection of $w$ has that minimal polynomial, so that $w$ acts on the plane by rotations by $2\pi/h$.}, see Figure \ref{publicity}.  In the picture, the 240 roots are the black dots, which lie, 30 each, on 8 concentric circles --- this much follows from general theory as in \cite[VI.1.11, Prob.~33(iv)]{Bou:g4}.  The edges in the picture join the images of roots that are nearest neighbors.  The colors don't have any mathematical significance.

\subsection*{The $E_8$ lattice} The lattice $Q$ together with the quadratic form $v \mapsto \frac12 \|v\|^2$ is the unique positive-definite, even, unimodular lattice of rank 8.  It has been known for a long time that it is the lattice with the densest sphere packing in $\R^8$, and the sphere packing with the highest kissing number \cite{ConwaySloane}.  Recently, Viazovska showed that it gives the densest sphere packing in $\R^8$, among all possible packings both lattice and non-lattice \cite{Viazovska:E8}.  At the time of her work, the densest packing was unknown in $\R^n$ for all $n \ge 4$!

The roots $R$ --- the short vectors in $Q$ --- are a 7-design on the unit sphere in $\R^8$, i.e., every polynomial on $\R^8$ of total degree at most 7 has the same average on $R$ as on the entire sphere, with the minimal number of elements \cite{BannaiSloane}; such a thing is very rare.  The theta-series for this lattice, $\theta(q) = \sum_{v \in Q} q^{\|v\|^2/2}$, is the 4-th Eisenstein series, which provides a connection with the $j$-invariant via the formula $j(q) = \theta(q)^3/\eta(q)^{24}$, where $\eta$ denotes Dedekind's eta-function.\footnote{For connections of the coefficients of $j(q)^{1/3}$ with dimensions of irreducible representations of $E_{8,\C}$, see \cite[esp.~\S4.1]{HeMcKay}.}
For more on this lattice and how it fits into the rest of mathematics, see \cite{ConwaySloane} or  \cite{Elkies1}.

Another view on the lattice is that it is a maximal order in the (real) octonions, as described in \cite{ConwaySmith}.  In particular, this turns $Q$ into a non-associative ring with 240 units, namely the roots $R$.

\subsection*{From the root system to the Lie algebra}
Starting from $R \subset \R^8$, one can write down a basis and multiplication table for the Lie algebra $\e_8$ over $\C$ as in \cite{Lusztig:can} or \cite{Geck}.  (This is a recent development.  The old way to do this is via the Chevalley relations as in, for example, \cite[\S{VIII.4.3}]{Bou:g7}.  Some signs have to be chosen --- see \cite{Ti:struct}, \cite{Carter:simple}, or \cite[\S2.3]{FrenkelKac} --- but the isomorphism class of the resulting algebra does not depend on the choices made.  Explicit multiplication tables are written in \cite[p.~328]{Cartan:real} and \cite{Vavilov:const}.)  In this way, one can make the Lie algebra from the root system.

\section{$E_8$ as an automorphism group}

To study $\e_8$, then, we can view it as a specific example of a simple Lie algebra described by generators and relations given by the root system.  We will exploit this view below.  However, that is not what is commonly done for other simple Lie algebras and groups!  It is typical to describe the ``classical'' (meaning not exceptional)  Lie algebras and groups not in terms of their root systems, but rather as, for example, $\SL_n$, the $n$-by-$n$ determinant 1 matrices; $\SO_n$, the $n$-by-$n$ orthogonal matrices of determinant 1; and $\Sp_{2n}$, the $2n$-by-$2n$ symplectic matrices.  Killing explicitly asked, in his 1889 paper, for similar descriptions of the exceptional Lie algebras and groups.

Looking back at the descriptions of the classical groups in the previous paragraph, each of them arises from a faithful irreducible representation $G \injects \GL(V)$ where $\dim V$ is smaller than $\dim G$.  Using now the classification of irreducible representations of simple Lie algebras, the Weyl dimension formula, and other general-purpose tools described entirely in terms of root systems (and which post-date Killing and can be found in standard textbooks such as \cite{Hum:Lie}), we find that the smallest faithful irreducible representations of $\g_2$, $\f_4$, $\e_6$, and $\e_7$ (alternatively, the simply connected Lie groups $G_2$, $F_4$, $E_6$, and $E_7$) have dimensions 7, 26, 27, and 56, which are much smaller than the dimensions 14, 52, 78, and 133 of the corresponding algebra.  
In the years since Killing posed his problem, all of these algebras have found descriptions using the corresponding representation, as we now describe.

The smallest nontrivial representation of $G_2$, call it $V$, has dimension 7.  Calculating with weights, one finds that $V$ has $G_2$-invariant linear maps $b \!: V \ot V \to \C$ (a bilinear form, which is symmetric) and $\times \!: V \ot V \to V$ (product, which is skew-symmetric\footnote{The product can be viewed as a 7-dimensional analogue of the usual cross-product in $\R^3$, compare for example \cite[\S10.3]{Numbers}.}), which are unique up to multiplication by an element of $\C^\times$.  To see this, one decomposes the $G_2$-modules $V^* \ot V^*$ and $V^* \ot V^* \ot V$ into direct sums of irreducible representations and notes that in each case there is a unique 1-dimensional summand.   The subgroup of $\GL(V)$ preserving these two structures is $G_2$.  Alternatively, we can define a $G_2$ invariant bilinear form $t$ and product on $\C \oplus V$ by setting
\begin{gather*}
t((x,c), (x',c')) := xx' + b(c,c') \ \text{and}\\
(x,c) \bullet (x',c') := (xx' + b(c,c'), xc' + x'c + c \times c').
\end{gather*}
With these definitions and scaling $\times$ so that $b(u \times v, u\times v) = b(u,u) b(v,v) - b(u,v)^2$,  $\C \oplus V$ is isomorphic to the complex octonions, a nonassociative algebra --- see \cite[\S10.3]{Numbers} for details --- and $G_2$ is the automorphism group of that algebra, compare \cite[\S2.3]{Sp:ex}.  Equivalently, $\g_2$ is the Lie algebra of $\C$-linear derivations of $\C \oplus V$.

In a similar way, the smallest nontrivial representation of $F_4$, call it $V$, has unique $F_4$-invariant linear maps $b \!: V \ot V \to \C$ (a symmetric bilinear form) and  $\times \!: V \ot V \to V$ (product, which is symmetric).  Extending in a similar manner, we find that $\C \oplus V$ has the structure of an Albert algebra --- a type of Jordan algebra --- and $F_4$ is the automorphism group of that algebra, see 
 \cite{ChevSchaf} or \cite[\S7.2]{Sp:ex}.  
 
The smallest nontrivial representation of $E_6$, call it $V$, has an $E_6$-invariant cubic polynomial $f \!: V \to \C$ that is unique up to multiplication by an element of $\C^\times$.  The isometry group of $f$, i.e., the subgroup of $g \in \GL(V)$ such that $f \circ g = f$, is $E_6$.  This was pointed out in \cite{Ca:th}, but see \cite[\S7.3]{Sp:ex} for a proof.

For $E_7$, the smallest nontrivial representation has an $E_7$-invariant quartic polynomial whose isometry group is generated by $E_7$ and the group $\qform{e^{\pi i/2}}$ of fourth roots of unity \cite{Frd:E7}.  That is, it has two connected components, and the connected component of the identity is $E_7$.

\subsection*{Answers for $E_8$}
The same approach, applied to $E_8$, is problematic.  The smallest nontrivial representation of $E_8$ has dimension 248; it is the action on its Lie algebra $\e_8$.  In this context $E_8$ is the automorphism group of the Lie algebra\footnote{Or, what is the same, the group of linear transformations preserving both the Killing form $\kappa \!: \e_8 \times \e_8 \to \C$ and the alternating trilinear form $\wedge^3 \e_8 \to \C$ given by the formula $x \wedge y \wedge z \mapsto \kappa(x,[yz])$.} $\e_8$.  This is not specific to $E_8$ --- it is a typical property of semisimple Lie algebras, see \cite{St:aut} for a precise statement ---and only serves to re-cast problems about the algebras as problems about the group and vice versa.

However, there is a general result that, in the case of $E_8$, says that for each faithful irreducible representation $V$, there is a homogeneous, $E_8$-invariant polynomial $f$ such that $E_8$ is the identity component of the stabilizer of $f$ in $\GL(V)$.

In the case of the smallest faithful irreducible representation, there is a degree 8 homogeneous polynomial on the Lie algebra $\e_8$ whose automorphism group is generated by $E_8$ and the eighth roots of unity $\qform{ e^{\pi i/4}}$.  (For experts: one takes a degree 8 generator for the Weyl-group-invariant polynomials on a Cartan subalgebra and pulls it back to obtain an $E_8$-invariant polynomial on all of $\e_8$.  The paper \cite{CederwallPalmkvist} gives a formula for it in terms of invariants of the $D_8$ subgroup of $E_8$.)  

The second smallest faithful irreducible representation $V$ has dimension 3875.  Combinatorial calculations with ``weights'' of the representation show that it has a nonzero, $E_8$-invariant symmetric bilinear form $b$ and trilinear form $t$, both of which are unique up to scaling by an element of $\C^\times$.  These give a commutative and $E_8$-invariant ``product'', i.e., a bilinear map, $\bullet \!: V \times V \to V$ defined by $t(x,y,z) = b(x, y \bullet z)$.  (Close readers of Chevalley will recognize this construction of a product from \cite[\S4.2]{Chev}.)  The automorphism group of $(V, \bullet)$ is $E_8$, see \cite{GG:simple}.

These results are recent and have not yet been seriously exploited.  

\subsection*{As an automorphism of varieties} 
As with other semisimple groups, one can view $E_8$ as the group of automorphisms of a projective  variety on which it acts transitively --- a \emph{flag variety} --- see \cite{Dem:aut} for the general statement.  In the case of $E_8$, the smallest such variety has dimension 57 (and can be obtained by quotienting out by the maximal parabolic subgroup determined by the highest root \cite[p.~152]{Ca:th}); see \cite{MR1846901} for an alternative treatment.

\section{Constructing the Lie algebra via gradings} \label{gradings}

Earlier, we described constructing the Lie algebra $\e_8$ by generators and relations using a Chevalley basis.  There are various refinements on this that we now discuss.  This situation is quite general and is often applied to study other exceptional groups, so we write in a slightly more general context.  Consider an irreducible root system $R$ with set $\D$ of simple roots, and let $\g$ be the Lie algebra generated from this using a Chevalley basis.

Suppose that $\g$ has a grading by an abelian group $\G$, meaning that as a vector space $\g$ is a direct sum of subspaces $\g_\gamma$ for $\gamma \in \G$ and that $[g_\gamma, g_{\gamma'}] \in \g_{\gamma + \gamma'}$ for $\gamma, \gamma' \in \G$.  Then $\g_0$ is a Lie subalgebra of $\g$ and each $\g_\gamma$ is a $\g_0$-module.  Roughly speaking, one can typically write down a formula for the bracket on $\g$ in terms of the action of $\g_0$ on the $\g_\gamma$, see \cite[Ch.~6]{Adams:ex} or \cite[\S22.4]{FH} for discussion.  The literature contains a profusion of such constructions, which are sometimes called a ``$\g_0$ construction of $\g$''.  (I will give some references below, but it is hopeless to attempt to be comprehensive.)

To produce such a construction of $\g$, then, we should look for gradings of $\g$.  These are controlled by subgroups of $\Aut(\g)$ in the following sense.  If $F = \C$ and $\g$ has a $\Z/n$ grading, then the map $\sum_\gamma y_\gamma \mapsto \sum e^{2\pi i \gamma/n} y_\gamma$ exhibits the $n$-th roots of unity $\mu_n(\C) = \qform{e^{2\pi i/n}}$ as a subgroup of $\Aut(\g)$ and conversely every homomorphism $\mu_n \to \Aut(\g)$ gives a $\Z/n$ grading on $\g$ by the same formula.  Generalizing this example somewhat to the language of diagonalizable group schemes as in \cite{SGA3.2} or \cite{Wa} and writing $\G^D$ for the Cartier dual of $\G$, we have the following folklore classification of gradings:
\begin{prop}
Let $R$ be a commutative ring and $\g$ a finitely generated Lie algebra over $R$.  Then for each finitely generated abelian group $\G$, there is a natural bijection between the set of 
$\G$-gradings on $\g$ and the set of morphisms $\G^D \to \Aut(\g)$ of group schemes over $R$.
\end{prop}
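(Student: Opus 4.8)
The plan is to factor the statement through its module-theoretic shadow and then isolate the role of the bracket. Recall that $\G^D$ is the diagonalizable group scheme $\operatorname{Spec} R[\G]$, where the group algebra $R[\G]$ is free as an $R$-module on symbols $e_\gamma$ ($\gamma \in \G$) with $e_\gamma e_{\gamma'} = e_{\gamma+\gamma'}$, and is a Hopf algebra with each $e_\gamma$ group-like (comultiplication $e_\gamma \mapsto e_\gamma \otimes e_\gamma$, counit $e_\gamma \mapsto 1$). Giving a morphism of $R$-group schemes $\G^D \to \GL(\g)$ is the same as giving a coaction $\rho \colon \g \to \g \otimes_R R[\G]$ making the underlying $R$-module of $\g$ an $R[\G]$-comodule. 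Writing $\rho(x) = \sum_\gamma \rho_\gamma(x) \otimes e_\gamma$ with $R$-linear operators $\rho_\gamma$ on $\g$, the counit axiom forces $\sum_\gamma \rho_\gamma = \mathrm{id}$ (the sum being finite on each element of $\g$, so that no completion enters) and coassociativity forces $\rho_\gamma \rho_{\gamma'} = \delta_{\gamma,\gamma'}\,\rho_\gamma$; hence the $\rho_\gamma$ are orthogonal idempotents summing to $\mathrm{id}$ and $\g = \bigoplus_\gamma \g_\gamma$ with $\g_\gamma := \rho_\gamma(\g)$ is a $\G$-grading of the module. Conversely a $\G$-grading $\g = \bigoplus_\gamma \g_\gamma$ gives the coaction $x_\gamma \mapsto x_\gamma \otimes e_\gamma$ on homogeneous elements, equivalently the action of a character $\chi \in \G^D(S) = \operatorname{Hom}(\G, S^\times)$ on $\g \otimes_R S$ by the scalar $\chi(\gamma)$ on $\g_\gamma \otimes_R S$. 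These constructions are mutually inverse; this is precisely the representation theory of diagonalizable group schemes, which I would invoke from \cite{SGA3.2} or \cite{Wa} rather than reprove.

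It remains to match, on the two sides of this module-level bijection, the condition singling out \emph{Lie-algebra} gradings, respectively \emph{Lie-algebra} automorphisms. On the grading side the condition is $[\g_\gamma,\g_{\gamma'}] \subseteq \g_{\gamma+\gamma'}$ for all $\gamma,\gamma'$. On the other side, the morphism $f \colon \G^D \to \GL(\g)$ attached to a grading factors through the subgroup scheme $\Aut(\g) \subseteq \GL(\g)$ if and only if $f$, viewed (Yoneda) as an $R[\G]$-point of $\GL(\g)$ --- this uses that $\G^D = \operatorname{Spec} R[\G]$ is affine --- lies in $\Aut(\g)(R[\G])$, and this point is the $R[\G]$-linear automorphism $u$ of $\g \otimes_R R[\G]$ that acts on $\g_\gamma \otimes_R R[\G]$ by the unit $e_\gamma$. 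Now expand $[x_\gamma, x_{\gamma'}] = \sum_\delta z_\delta$ with $z_\delta \in \g_\delta$: the identity $u\bigl([x_\gamma \otimes 1,\, x_{\gamma'} \otimes 1]\bigr) = [u(x_\gamma \otimes 1),\, u(x_{\gamma'} \otimes 1)]$ reads $\sum_\delta z_\delta \otimes e_\delta = \sum_\delta z_\delta \otimes e_{\gamma+\gamma'}$ in $\g \otimes_R R[\G]$, and since the $e_\delta$ form an $R$-basis of $R[\G]$ this holds exactly when $z_\delta = 0$ for every $\delta \neq \gamma+\gamma'$, i.e.\ exactly when $[x_\gamma,x_{\gamma'}] \in \g_{\gamma+\gamma'}$. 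Hence ``$u$ respects the bracket'' is literally the grading condition, and the module-level bijection restricts to the desired one.

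Naturality in $\G$ is then formal: a homomorphism $\varphi \colon \G \to \G'$ sends a $\G$-grading to the $\G'$-grading whose $\gamma'$-component is $\bigoplus_{\gamma :\, \varphi(\gamma) = \gamma'}\g_\gamma$ (still bracket-compatible because $\varphi$ is a homomorphism) and sends a morphism $\G^D \to \Aut(\g)$ to its composite with the Cartier dual $\varphi^D \colon (\G')^D \to \G^D$; these agree because the coaction of the coarsened grading is the push-forward of the original one along the coordinate-ring map $R[\G] \to R[\G']$, $e_\gamma \mapsto e_{\varphi(\gamma)}$. The only points requiring care are the two already flagged --- that the counit axiom genuinely produces the direct-sum decomposition over an \emph{arbitrary} commutative ring, and that factoring through the subscheme $\Aut(\g)$ may be detected on the tautological $R[\G]$-point --- and I would regard the latter as the single (mild) obstacle. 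The finite-generation hypotheses on $\G$ and on $\g$ serve only to make $\G^D$ and $\Aut(\g)$ honest affine group schemes of finite type over $R$; the bijection itself is purely formal.
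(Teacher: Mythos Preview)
Your argument is correct and follows the same route as the paper's sketch: pass between $\G^D\!\to\Aut(\g)$ and an $R[\G]$-comodule structure on $\g$, read off the graded pieces as the coinvariants $\g_\gamma=\{y\mid\rho(y)=y\otimes e_\gamma\}$, and check that bracket-compatibility matches the automorphism condition. You simply supply more of the details the paper omits --- the orthogonal-idempotent decomposition, the explicit Yoneda check that the universal $R[\G]$-point respects the bracket, and the naturality in $\G$ --- so there is no substantive difference in approach.
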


\begin{proof}[Sketch of proof]
The group $\G$ is naturally identified with the dual $\Hom(\G^D, \Gm)$, so given a $\G$-grading on on $\g$, setting $t \cdot y = \gamma(t)y$ for $y \in \g_\gamma$ and $t \in \G^D$ defines a homomorphism $\G^D \to \Aut(\g)$. Conversely, a homomorphism $\G^D \to \Aut(\g)$ gives $\g$ the structure of a comodule under the coordinate ring of $\G^D$, i.e., the group ring $R[\G]$, which amounts to an $R$-linear map $\rho \!: A \to A \ot R[\G]$ that is compatible with the Hopf algebra structure on $R[\G]$.  One checks that $\g_{\gamma} := \{ y \in \g \mid \rho(y) = y \ot \gamma \}$ gives a $\G$-grading on $\g$.\end{proof}

As is clear from the proof, the hypothesis that $\g$ is a Lie algebra is not necessary, and the result applies equally well to other sorts of algebraic structures over $R$.  One could also replace $R$ by a base scheme.

\subsection*{Examples: gradings by free abelian groups} By the proposition, a grading of $\g$ by a free abelian group $\Z^r$ corresponds to a homomorphism of a rank $r$ torus $\Gm^r \to \Aut(\g)$.  The image is connected, so it lies in the identity component of $\Aut(\g)$, which is the adjoint group, call it $\Gad$, with Lie algebra $\g$.  Up to conjugacy, all homomorphisms $\Gm^r \to \Gad$ have image in a given maximal torus $T$, and we conclude that all gradings of $\g$ by $\Z^r$ are obtained from a  rank $r$ sublattices of $\Hom(\Gm, T)$, i.e., the weight lattice for the root system dual to $R$.  ($\E_8$ is self-dual and so in that case it is the same as $Q$.)

A natural grading of this sort is obtained by choosing some $r$-element subset $\D'$ of $\D$ and defining the grade of a root vector $x_\beta$ to be the coefficients $c_\delta$ of elements of $\D'$ in the expression $\beta = \sum_{\delta \in \D} c_\delta \delta$.  Basic facts concerning this kind of grading can be found in \cite{ABS} or \cite{Rubenthaler}; most importantly, the $\g_\gamma$ are irreducible representations of $\g_0$ with an open orbit.  This sort of grading on $\e_8$ (and $\e_7$ and $\e_6$) with $r = 2$ was used in \cite{BDeMedts:new} to reconstruct the quadrangular algebras studied in \cite{Weiss:quad} for studying the corresponding Moufang polygons.

The simplest example of a grading as in the previous paragraph is to take $\D'$ to be a singleton $\{ \delta' \}$, in which case one finds a $\Z$-grading of $\g$ with support $\{ -n, \ldots, n \}$ where $n$ is the coefficient of $\delta'$ in the highest root.  Famously, in the case $n = 1$, one finds a 3-term grading in which the $\pm 1$ components make up a Jordan pair, and one can use the Jordan pair to recover the bracket on $\g$, see for example \cite{Neher:3}.  But that case does not occur for $\E_8$, because its highest root is 
\begin{equation} \label{highest}
\omega_8 = e_7 + e_8 = 2\alpha_1 + 3\alpha_2 + 4\alpha_3 + 6\alpha_4 + 5\alpha_5 + 4\alpha_6 + 3\alpha_7 + 2\alpha_8,
\end{equation}
where the smallest coefficient is 2 for $\delta' = \alpha_1$ or $\alpha_8$.  Those $\delta'$ give a 5-term grading, where the $\pm 1$ components are structurable algebras as in \cite{A:survey} or \cite[Th.~4]{A:models} or a Kantor pair as in \cite{AFS:Weyl}, depending on your point of view.  Starting with a structurable algebra, one can also recover the bracket on $\g$, as was done for $\e_8$ in \cite[\S8]{A:models}.

\subsection*{Examples: $\Z/n$ gradings}
By the proposition, a $\Z/n$ grading on $\g$ is given by a homomorphism $\mu_n \to \Aut(\g)$.  The ones with image in $\Gad$ (as is necessarily true for $\E_8$, because $\Aut(\g) = \Gad$ in that case) are classified by their Kac coordinates, see \cite{Kac:inf} or \cite{Se:Kac}, or see \cite{Lehalleur} for an expository treatment.  Therefore in principle one has a list of all of the corresponding gradings.

A popular choice for such a grading is to start by picking a simple root $\delta' \in \D$, thus obtaining as in the preceding subsection an inclusion $\iota \!: \Gm \to \Gad$ such that, for $\delta \in \D$, the composition $\delta  \iota \!: \Gm \to \Gm$ is the identity for $\delta = \delta'$ and trivial otherwise.  The desired $\Z/n$ grading is obtained by composing $\iota$ with the natural inclusion $\mu_n \hookrightarrow \Gm$ for $n$ the coefficient of $\delta'$ in the highest root.  One finds again in this case that each $\g_\gamma$ is an irreducible representation of $\g_0$, see \cite{Vinberg:Weyl}. The identity component of the centralizer of $C_{\Gad}(\mu_n)$ is semisimple and its Dynkin diagram is obtained by taking the so-called \emph{extended Dynkin diagram} of $R$ --- obtained by adding the negative of the highest root as a new vertex --- and deleting the vertex $\delta'$ and all edges connected to it.  (This is the first step in the iterative procedure described in \cite{BoSieb} for calculating the  maximal-rank  semisimple subgroups of $\Gad$.  See \cite[Ch.~VI, \S4, Exercise 4]{Bou:g4}  or \cite{Lehalleur} for treatments in the language of root systems or group schemes, respectively.)

For example, doing this for $R$ the root system of type $\mathsf{G}_2$ and $\delta'$ the simple root that is orthogonal to the highest root, one finds $n = 3$, $C_{\Gad}(\mu_3)$ has identity component $\SL_3$, and the $\pm 1$-components of $\g_2$ are the tautological representation and its dual.  The action of $G_2$ on the octonions, restricted to this $\mu_3$ subgroup, gives the direct sum decomposition of the octonions known as the Zorn vector matrix construction as in \cite{Schfr}.

\begin{eg}[$D_8 \subset E_8$] \label{D8}
Doing this with $E_8$ and $\delta' = \alpha_1$, we find $n = 2$ by \eqref{highest}, $C_{E_8}(\mu_2)$ has identity component $\Spin_{16}/\mu_2$, and the 1-component of $\e_8$ is the half-spin representation.  This grading was surely known to Cartan, and is the method used to construct $\e_8$ in  \cite{Adams:ex} and \cite{Fig}.
\end{eg}

\begin{eg}[$A_8 \subset E_8$] \label{A8E8}
Doing this with $E_8$ and $\delta' = \alpha_2$, we find $n = 3$, $C_{E_8}(\mu_3)$ has identity component $\SL_9/\mu_3$, and we have 
\[
\e_8 \cong \sl_9 \oplus (\wedge^3 \C^9) \oplus (\wedge^3 \C^9)^*
\]
as modules under $\sl_9$.  One can use this operation to define the Lie bracket on all of $\e_8$, see for example \cite{Frd:E8}, \cite[Exercise 22.21]{FH}, and \cite{Faulk:E8}.
\end{eg}

\begin{eg}[$A_4 \times A_4 \subset E_8$] \label{A4A4}
Taking $\delta' = \alpha_5$, we find $n = 5$, and $C_{E_8}(\mu_5)$ is isomorphic to $(\SL_5 \times \SL_5)/\mu_5$.  See \cite[\S14]{G:lens} or \cite[\S6]{GiQueg} for concrete descriptions of the embedding, or \cite[1.4.2]{Se:sgf} for how it provides a non-toral elementary abelian subgroup of order $5^3$.
\end{eg}

Dynkin \cite[Table 11]{Dynk:ssub} lists 75 different subsystems of $E_8$ constructed in this way, and each of these gives a way to construct the Lie algebra $E_8$ from a smaller one.  Here is such an example:
\begin{eg}[$D_4 \times D_4 \subset E_8$] \label{D4D4}
There is a subgroup $\mu_2 \times \mu_2$ of $E_8$ such that each of the three involutions has centralizer $\Spin_{16}/\mu_2$ as in Example \ref{D8}, and the centralizer of $\mu_2 \times \mu_2$ is the quotient of $\Spin_8 \times \Spin_8$ by a diagonally embedded copy of $\mu_2 \times \mu_2$.  This gives a grading of $\e_8$ by the Klein four-group, with $(\e_8)_0 = \so_8 \times \so_8$ and the other three homogeneous components being tensor products of an 8-dimensional representation of each of the copies of $\so_8$.  The corresponding construction of $E_8$ is the one described in \cite[Ch.~IV]{Walde}, \cite{Loke:th}, or \cite{LM:tri}.
\end{eg}

We have described some large families of gradings, but there are of course others, see for example \cite{eld:Jordan}.  One can also construct $\e_8$ without using a grading.  The famous \emph{Tits construction} of $E_8$ is of this type, and we will describe it in detail in \S\ref{Tits.sec} below.

\section{$E_8$ over the real numbers}

A Lie algebra $L$ over the real numbers, i.e., a vector space over $\R$ with a bracket that satisfies the axioms of a Lie algebra, is said to be of type $E_8$ if $L \ot_\R \C$ is isomorphic to the Lie $\C$-algebra $\e_8$ studied by Killing.  (The notation $L \ot_\R \C$ is easy to understand concretely.  Fix a basis $\ell_1, \ldots, \ell_n$ for the real vector space $L$.  Then $L \ot_\R \C$ is a complex vector space with basis $\ell_1, \ldots, \ell_n$ and the bracket on $L \ot_\R \C$ is defined by $[\sum_i z_i \ell_i, \sum_j z'_j \ell_j] = \sum_i \sum_j z_i z'_j [\ell_i, \ell_j]$ for $z_i, z'_j \in \C$.)

Cartan proved in \cite{Cartan:real} that there are exactly three Lie $\R$-algebras of type $E_8$.  One can reduce the problem of determining the isomorphism class of such algebras to determining the orbits under the Weyl group of elements of order 2 in a maximal torus in the compact real form of $E_8$, see \cite[\S{III.4.5}]{SeCG}.  Given such an involution and the description of $E_8$ in terms of the Chevalley relations, one can do calculations, so this is an effective construction.  Two of the three $E_8$'s over $\R$ have names: split and compact, where the automorphism group of the compact Lie algebra is a real Lie group that is compact.  The third one does not have a standard name; we call it \emph{intermediate} following \cite{Selet2}, but others say \emph{quaternionic}.  This is summarized in Table \ref{real.table}. The first column of the table indicates the Tits index or Satake diagram for the group; the number of circles gives the dimension of a maximal split torus.
\begin{table}[hbt]
\begin{tabular}{ccr} 
Tits index&&signature of \\ 
or Satake diagram&name&Killing form\\ \hline
\begin{picture}(90,21)(0,4)
 \put(0,0){\usebox{\Eviii}} 
\end{picture}& compact & $-248$\\
\begin{picture}(90,21)(0,4)
 \put(0,0){\usebox{\Eviii}} 
 \put(0,6){\circle{7}}
 \multiput(60,6)(15,0){3}{\circle{7}}
\end{picture}& intermediate&$-24$\\
\begin{picture}(90,21)(0,4)
 \put(0,0){\usebox{\Eviii}} 
 \put(30,21){\circle{7}}
 \multiput(0,6)(15,0){7}{\circle{7}}
\end{picture}&split&8
\end{tabular}
\caption{Real forms of $E_8$} \label{real.table}
\end{table}

\subsection*{Physics}
Various forms of $E_8$ play a role in physics.  For example, the compact real form appears in string theory --- see \cite{GHMR}, \cite{DMW}, and \cite{DMF} --- and the split real form appears in supergravity \cite{MarcusSchwarz}.

On the other hand, 
$E_8$ does not play any role in the Grand Unified Theories of the kind described in \cite{BaezHuerta} because every finite dimensional representation of every real form of $E_8$ is real orthogonal and not unitary.  And at least one well-publicized approach to unification based on $E_8$ does not work, as explained in \cite{DiG}. 

\subsection*{A magnet detects $E_8$ symmetry} 
A few years ago, experimental physicsts reported finding ``evidence for $E_8$ symmetry'' in a laboratory experiment \cite{Coldea}.  This is yet a different role for $E_8$ than those mentioned before.  The context is as follows.  A one-dimensional magnet subjected to an external magnetic field can be described with a quantum one-dimensional Ising model --- this is standard --- and, in the previous millennium and not using $E_8$, \cite{Zam} and \cite{DelfinoMussardo} made numerical predictions regarding what would happen if the magnet were subjected to a second, orthogonal magnetic field.  In the intervening years, such an experiment became possible.  Some of the difficulties that had to be overcome were producing a large enough crystal with the correct properties (such as containing relatively isolated one-dimensional chains of ions) and producing a magnetic field that was large enough (5 tesla) and tunable.

The measurements from the experiment were as predicted, and therefore the experiment can be viewed as providing evidence for the theory underlying the predictions.\footnote{Some skeptics aver that the evidence is weak, in that the confirmed predictions  amount to somewhat noisy measurements of two numbers, namely the ratio of the masses of the two heaviest quasi-particles and the ratio of the corresponding intensities.  In Bayesian language, the skeptics' estimate of the (probability of) correctness of the underlying theory was only somewhat increased by learning of the experimental results.}  Which raises the question: how does $E_8$ appear in these predictions?  Indeed, looking at the original papers, one finds no use of $E_8$ in making the predictions (although Zamolodchikov did remark already in \cite{Zam} that numerological coincidences strongly suggest that $E_8$ should play a role).  Those papers, however, in addition to a handful of unobjectionable non-triviality assumptions, rest on a serious assumption that the perturbed conformal field theory would be an integrable theory. It was later discovered that if one views the original conformal field theory as arising from a compact real Lie algebra via the ``coset construction'', then the perturbed theory is given by the corresponding affine Toda field theory, see \cite{FateevZam}, \cite{HollowoodMansfield}, and \cite{EguchiYang}.  In this way, invoking the compact real form of $E_8$ simplifies the theory by removing an assumption, so evidence for the theory may be viewed as ``evidence for $E_8$ symmetry''. See \cite{BoG} or \cite{Kost:gosset} for more discussion.

A further experiment to try would be to test the predictions for a thermal perturbation of the tricritical Ising model, for which the role of $E_8$ is played instead by $E_7$.  The unperturbed version has already been realized \cite{TFV}.

\section{$E_8$ over any field} \label{fields.sec}

It makes sense to speak of Lie algebras or groups ``of type $E_8$'' over any field.  
Starting with the $E_8$ root system and the Chevalley relations as in the previous section, one can put a Lie algebra structure on $\Z^{248}$.  The isomorphism class of the resulting Lie algebra $\e_{8,\Z}$ does not depend on the choice of signs, and in this way one obtains, for every field $F$, a Lie $F$-algebra $\e_{8,F} := \e_{8,\Z} \ot F$ called the \emph{split $\e_8$ over $F$}.  Its automorphism group $E_{8,F}$ is an algebraic group over $F$\footnote{Meaning it is a smooth variety such that the functor $\{ \text{field extensions of $F$} \} \to \Sets$ given by sending $K \mapsto E_{8,F}(K)$ factors through the category of groups.} known as the \emph{split group of type $E_8$ over $F$}.

\subsection*{Finite fields}
For example, if $F$ is a \emph{finite} field with $q$ elements, then the $F$-points $E_{8,F}(F)$ are a finite simple group, normally denoted $E_8(q)$.  These are enormous groups, whose order is a polynomial in $q$ with leading term $q^{248}$.  The smallest such group is $E_8(2)$, which has more than $3 \times 10^{74}$ elements, about $10^{20}$ times the size of the Monster \cite[p.~242]{Atlas}.   A special case of the Inverse Galois Problem is to ask: Does $E_8(q)$ occur as a Galois group over $\Q$?  It is known when $q$ is a prime larger than 5 by \cite{GurMalle:E8} and \cite{Yun}.

\subsection*{Groups of type $E_8$}
More generally, a Lie algebra $L$ over a field $F$ is said to \emph{have type $E_8$} or be an \emph{$F$-form of $E_8$} if there is a field $K \supseteq F$ such that $L \ot_F K$ is isomorphic to $L_0 \ot_\Z K$.  A similar definition applies for algebraic groups.

An algebraic group $G$ of type $E_8$ over $F$ is \emph{split} if it is isomorphic to $E_{8,F}$.  It is \emph{isotropic} if it contains a copy of $\Gm$ (equivalently, if $\Lie(G)$ has a nontrivial $\Z$-grading), and \emph{anisotropic} otherwise.  For $F = \R$, there is a unique anisotropic form of $E_8$, the compact form, but for other fields $F$ there may be many, see the examples in \S\ref{rost.sec}.

\subsection*{Fields over which the split $E_8$ is the only $E_8$}
Over some fields $F$, the split group $E_{8,F}$ is the only group of type $E_8$.  This is true for algebraically closed fields (trivially by \cite{SGA3}).  It is also true for finite fields, fields of transcendence degree 1 over an algebraically closed field, and more generally for fields of cohomological dimension at most 1 \cite{St:reg}.  Classically it is also known for $p$-adic fields and it is also true for every field that is complete with respect to a discrete valuation with residue field of cohomological dimension $\le 1$ \cite{BrTi3}.  It is also true for global fields of prime characteristic, i.e., a finite extensions of $\F_p(t)$ for some $p$ \cite{Hrdr:3}.  

\subsection*{Serre's ``Conjecture II''} A bold conjecture of Serre's from 1962 \cite{Se62} says that, for every field $F$ of cohomological dimension\footnote{Here \emph{cohomological dimension} means the supremeum of the cohomological $p$-dimension as $p$ varies over all primes. Cohomological $p$-dimension is defined via Galois cohomology as in \cite{SeCG} for $p \ne \car F$ and via Milne-Kato cohomology if $p = \car F$.  Furthermore, the conjecture only demands this for a finite list of primes depending on the Killing-Cartan type of $G$, see \cite{SeCGp} or \cite{Gille:scsurv} for precise statements.  In the case of $E_8$, the hypothesis only concerns the primes 2, 3, and 5.} $\le 2$, and $G$ a simply connected semisimple algebraic group over $F$, $H^1(F, G) = 0$.  As $E_{8,F} = \Aut(E_{8,F})$, by descent $H^1(F, E_{8,F})$ is identified with the pointed set of groups of type $E_8$ over $F$ (with distinguished element $E_{8,F}$ itself).  In this case, Serre's conjecture says: the split group $E_{8,F}$ is the only form of $E_8$ over $F$.  What made this conjecture bold is that it includes totally imaginary number fields and finite extensions of $\F_p(t)$ as special cases and the conjecture was not proved in these special cases until the 1980s \cite{Ch:hasse} and 1970s \cite{Hrdr:3} respectively.  Furthermore, the hypothesis on cohomological dimension is very difficult to make use of; a key breakthrough in its application was the Merkurjev--Suslin Theorem \cite[Th.~24.8]{Suslin:norm} connecting it with surjectivity of the reduced norm of a central simple associative algebra.

Despite lots of progress during the past 20 years as in \cite{BP}, and \cite{Gille:sc}, the conjecture remains open; see \cite{Gille:scsurv} for a recent survey.  It is known to hold for $E_8$ if:
\begin{itemize}
\item every finite extension of $k$ has degree a power of $p$ for some fixed $p$ by \cite[\S{III.2}]{Gille:sc} for $p = 2, 3$ (or see \cite{Ch:mod3} or \cite[10.22]{GPS} for $p = 3$) and by \cite{Ch:mod5} for $p = 5$ (or see \cite[15.5]{G:lens}).  The $p = 2$ case can also be deduced from properties of Semenov's invariant (see Theorem \ref{semenov}) if $\car F = 0$.
\item $k$ is the function field of a surface over an algebraically closed field, see \cite{dJHS} and \cite{Gille:scsurv}.
\end{itemize}

\subsection*{Forms of $E_8$ over $\Z$} One can equally well consider $\Z$-forms of $E_8$ as in \cite{Gross:Z} or \cite{Conrad:Z}.  We don't go into this here, but note that $E_8$ is unusual in this context in that the number of $\Z$-forms of the ``compact'' real $E_8$ is at least $10^4$, in stark contrast to at most 4 for other simple groups of rank $\le 8$, see \cite[Prop.~5.3]{Gross:Z}.

\section{Tits's construction} \label{Tits.sec}

Tits gave an explicit construction of Lie algebras which produces $E_8$ as a possible output \cite{Ti:const}.
Computing using root systems, one can see that the Lie algebra $\e_8$ contains $\g_2 \times \f_4$ as a subalgebra. It corresponds to an inclusion of groups $G_2 \times F_4 \injects E_8$.  Applying Galois cohomology $H^1(F, *)$, we find a map of pointed sets
\begin{equation} \label{Tits.map}
H^1(F, G_2) \times H^1(F, F_4) \to H^1(F, E_8)
\end{equation}
that is functorial in $F$; we call this \emph{Tits's construction}.  To interpret this, we note that $E_8$ is its own automorphism group and similarly for $F_4$ and $G_2$.  Again, descent tells us that $H^1(F, E_8)$ is naturally identified with the set of isomorphism classes of algebraic groups (or Lie algebras) of type $E_8$ over $F$, with distinguished element the split group $E_8$.  The same holds for $F_4$ and $G_2$.  Thus the function \eqref{Tits.map} can be viewed as a construction that takes as inputs a group of type $G_2$ and a group of type $F_4$ and gives output a group (or Lie algebra) of type $E_8$.

Alternatively, viewing $G_2$ and $F_4$ as automorphism groups of an octonion and an Albert algebra respectively, Galois descent identifies $H^1(F, G_2)$ and $H^1(F, F_4)$ with the sets of isomorphism classes of octonion $F$-algebras and Albert $F$-algebras respectively, cf.~\cite{KMRT}.  From this perspective, the construction takes as input two algebras.

Tits expressed his construction as an explicit description of a Lie algebra $L$ made from an octonion $F$-algebra and an Albert $F$-algebra, see \cite{Ti:const} or \cite{Jac:ex} for formulas.

\begin{eg} \label{Tits.rkill}
Let $G$ be a group of type $E_8$ obtained from an octonion algebra $C$ and Albert algebra $A$ via Tits's construction; we give the formula for the isomorphism class of its Killing form.  The algebra $C$ is specified by a 3-Pfister quadratic form $\gamma_3$ and $A$ has $i$-Pfister quadratic forms $\phi_i$ for $i  = 3, 5$, see \cite{GMS}.  Formulas for the Killing forms on the subgroups $\Aut(C)$ and $\Aut(A)$ are given in \cite[27.20]{GMS}, and plugging these into the calculation in \cite[p.~117, (144)]{Jac:ex} gives that the Killing form on $G$ is
\[
\qform{60}[8 - (4 \gamma_3 + 4 \phi_3 + \qform{2} \gamma_3(\phi_5 - \phi_3) )]
\]
in the Witt ring of $k$, exploiting the notation for elements of the Witt ring from \cite{EKM}.
\end{eg}

In case $F = \R$, we know that there are two groups of type $G_2$ or octonion algebras, the compact form (corresponding to the octonion division algebra) and the split form (the split algebra).  There are three real groups of type $F_4$ or Albert $\R$-algebras, the compact form (the Albert algebra with no nilpotents), the split form (the split algebra), and a third form we will call ``intermediate'' (the Albert algebra with nilpotents but not split).  The book \cite{Jac:ex} contains a careful calculation of the Killing form on the Lie algebra of type $\E_8$ produced by this construction for each choice of inputs, and we list the outputs in Table \ref{real.table2}.

\begin{table}[hbt]
\begin{tabular}{c|cc|cc}
&Rost&Semenov&\multicolumn{2}{c}{inputs to Tits's construction} \\
Form of $E_8$&invariant&invariant&$G_2$&$F_4$\\ \hline\hline
compact&0&1&compact&compact \\ \hline
intermediate &1&NA&split&intermediate or compact\\
&&&compact&split \\ \hline
split&0&0&split&split \\
&&&compact&intermediate
\end{tabular}
\caption{Real forms of $E_8$: cohomological invariants and Tits's construction} \label{real.table2}
\end{table}

One could ask for a description of the Tits index of the $E_8$ produced by \eqref{Tits.map} in terms of the the inputs, e.g., to know whether and how the output group is isotropic.  In one direction, the answer is easy: 
 the groups of type $G_2$ and $F_4$ used as inputs to the construction are subgroups of the output, and therefore isotropic inputs give isotropic outputs.  However, in the analogous question for Tits's construction of groups of type $F_4$ or $E_6$, anisotropic inputs can produce isotropic outputs, see for example \cite{GPe}.

\begin{eg}[Number fields] \label{Hasse}
If $G$ is a group of type $E_8$ over a number field $F$, then the natural map
\[
H^1(F, G) \to \prod_{\text{real closures $R$ of $F$}} H^1(R, G)
\]
is bijective.  (This is true more generally for every semisimple simply connected group $G$ \cite[p.~286]{PlatRap}.  It is the Hasse Principle mentioned in the introduction.)  In particular, if $F$ has $r$ real embeddings, then there are $3^r$ isomorphism classes of groups of type $E_8$ over $F$, all of which are obtained from Tits's construction.  Two groups of type $E_8$ over $F$ are isomorphic if and only if their Killing forms are isomorphic, because this is so when $F = \R$.
\end{eg}

\section{Cohomological invariants; the Rost invariant} \label{rost.sec}

At this point in the survey, we have assembled only a few tools for determining whether or not two groups or Lie algebras of type $E_8$ over a field $F$ are isomorphic, or whether they can be obtained as outputs from Tits's construction.  Indeed, the only tools we have so far for this are the Tits index and the Killing form.  In \cite{Ti:deg}, Tits showed that a ``generic''\footnote{Meaning a \emph{versal} form as defined in \cite{GMS}.  By definition, one can obtain from a given versal form of $E_8$ every group of type $E_8$ over every extension of $F$ by specialization.}
 group $G$ of type $E_8$ has no reductive subgroups other than rank 8 tori (which necessarily exist); such a group cannot arise from Tit's construction because those groups all have a subgroup of type $G_2 \times F_4$.
 This brings us to the state of knowledge in 1990, as captured in \cite{Se9091}.  
 
To address this gap, we will use various invariants in the sense of \cite{GMS}.  Let $G$ be an algebraic group over a field $F$.  Let $A$ be a functor from the category of fields $F$ to abelian groups.  An \emph{$A$-invariant $f$ of $G$} is a morphism of functors $f \!: H^1(*, G) \to A(*)$, i.e., for each field $K$ containing $F$, there is a function $f_K \!: H^1(K, G) \to A(K)$ and for each morphism $\alpha \!: K \to K'$ the diagram
\[
\begin{CD}
H^1(K, G) @>{f_K}>> A(K) \\
@V{H^1(\alpha)}VV @VV{A(\alpha)}V \\
H^1(K', G) @>{f_{K'}}>>A(K')
\end{CD}
\]
commutes.  We write $\Inv(G, A)$ for the collection of such invariants; it is an abelian group.  We put $\Inv(G, A)_0$ for the subgroup of those $f$ such that $f_K$ sends the distinguished element to zero for all $K$.  Evidently $\Inv(G, A) = \Inv(G, A)_0 \oplus A(F)$.

\subsection*{The Rost invariant}
Considering the case where $A$ is Galois cohomology with torsion coefficients and $G$ is a simple and simply connected group, one finds that $\Inv(G, H^d(*, \Q/\Z(d-1))$ is zero for $d = 1$ (because $G$ is connected \cite[31.15]{KMRT}) and for $d = 2$ (because $G$ is simply connected, see \cite[Th.~2.4]{BlinMer}).  For $d = 3$, Markus Rost proved that the group is cyclic with a canonical generator $r_G$, now known as the \emph{Rost invariant}, whose basic properties are developed in \cite{GMS}.  (Rost's theorem has recently been extending to include the case where $G$ is split reductive, see \cite{M:ssinv2} and \cite{LaackmanM}.)  For $G = G_2, F_4, E_8$, $r_G$ has order 2, 6, 60 respectively.  

\begin{eg}
The composition 
\[
H^1(F, G_2) \times H^1(F, F_4) \xrightarrow{\mathrm{Tits}} H^1(F, E_8) \xrightarrow{r_{E_8}} H^3(F, \Q/\Z(2))
\]
is $r_{G_2} + r_{F_4}$, by an argument analogous to that in the proof of \cite[Lemma 5.8]{GQ}.  In particular, the image belongs to the 6-torsion subgroup, $H^3(F, \Z/6(2))$.  Using this, one uses the (known, calculable) Rost invariants $r_{G_2}$ and $r_{F_4}$ to calculate $r_{E_8}$ for all forms of $E_8$ over $\R$ or a number field.  We have included these values in Table \ref{real.table2}.
\end{eg} 

\begin{eg} \label{rost.mod5}
Considering the $\Z/5$-grading on $\e_8$ from Example \ref{A4A4}, we find a subgroup $(\SL_5 \times \SL_5)/\mu_5$ of $E_8$, which in turn contains a subgroup $\mu_5 \times \mu_5 \times \Z/5$.  The composition
\begin{equation} \label{rost.5}
H^1(F, \mu_5) \times H^1(F, \mu_5) \times H^1(F, \Z/5) \to H^1(F, E_8) \xrightarrow{r_{E_8}} H^3(F, \Q/\Z(2))
\end{equation}
is, up to sign, the cup product map into $H^3(F, \Z/5(2))$ \cite{GiQueg}.  Therefore, if $H^3(F, \Z/5(2))$ is not zero, it contains a nonzero symbol $s$, and from this one can construct a group $G$ of type $E_8$ with $r_{E_8}(G) = s$.  This group $G$ cannot arise from Tits's construction.

If $F$ is 5-special --- meaning that every finite extension of $F$ has degree a power of 5 --- then the first map in \eqref{rost.5} is surjective.
\end{eg}

\begin{eg} \label{rost.mod3}
We can focus our attention on the 3-torsion part of the image of $r_{E_8}$ by considering $20 r_{E_8}$, which has image in $H^3(*, \Z/3(2))$.  What is its image?  Given a nonzero symbol $s$ in $H^3(F, \Z/3(2))$, there is an Albert division algebra $A$ constructed by the first Tits construction such that $r_{F_4}(A) = s$.  The inclusions $F_4 \subset E_6 \subset E_8$ induce a map $H^1(F, F_4) \to H^1(F, E_8)$ such that the image $G$ of $A$ has Tits index
\begin{equation} \label{E8E6}
\begin{picture}(90,21)(0,4)
 \put(0,0){\usebox{\Eviii}} 
 \multiput(75,6)(15,0){2}{\circle{7}}
\end{picture}
\end{equation}
and $r_{E_8}(G) = r_{F_4}(A) = s$.  Conversely, given an $F$-form $G$ of $E_8$ over any field $F$ such that $20r_{E_8}(G)$ is a symbol in $H^3(F, \Z/3(2))$, \cite{GPS} shows that there is a finite extension $K$ of $F$ of degree not divisible by 3 such that $G \times K$ has Tits index \eqref{E8E6}.
\end{eg}

\begin{eg} \label{mod3.symbol}
There exists a field $F$ and an $F$-form $G$ of $E_8$ such that $20 r_{E_8}(G)$ is \emph{not} a symbol in $H^3(F, \Z/3(2))$, nor does it become a symbol over any finite extension of $F$ of degree not divisible by 3.  (In particular, such a group does not arise from Tits's construction.)  To see this, one appeals to the theory of the $J$-invariant from \cite{PSZ:J} that there exists such a $G$ with $J_3(G) = (1, 1)$; \cite[Lemma 10.23]{GPS} shows that such a group has the desired property.
\end{eg}

In contrast to the case of 3 and 5 torsion as in the preceding two examples, the 2-primary part of $r_{E_8}(G)$ may have a longer symbol length and a more subtle relationship with the isotropy of $G$.  As in Table \ref{real.table2}, the compact real form has Rost invariant 0, yet is anisotropic.  In the other direction,
Appendix A.6 of \cite{G:lens} gives an example of an isotropic group $G$ of type $E_8$ over a field $F$ such that $r_{E_8}(G)$ belongs to $H^3(F, \Z/2(2))$, and $r_{E_8}(G)$ is not a sum of fewer than three symbols.

Underlying the results used in Examples \ref{rost.mod3} and \ref{mod3.symbol} are analyses of the possible decompositions of the Chow motives with $\F_p$-coefficients of the flag varieties for a group of type $E_8$.  This is a way to study the geometry of these varieties despite their lack of rational points in cases of interest, and it has been widely exploited over the last decade for analyzing semisimple groups over general fields, see for example the book \cite{EKM} for applications to quadratic form theory.

\begin{eg}
Suppose $G$ is a group of type $E_8$ such that $r_{E_8}(G)$ has order divisible by 15.  (For example, if $G$ is versal then $r_{E_8}(G)$ has order 60 by \cite[p.~150]{GMS}.)  Then $G$ has no proper reductive subgroups other than rank 8 tori, by the argument for \cite[Th.~9.6]{GaGi}.  Essentially, employing the Rost invariant simplifies Tits's proof of the fact about versal groups mentioned at the beginning of this section.
\end{eg}

\section{The kernel of the Rost invariant; Semenov's invariant}

The examples in the previous section show that the Rost invariant can be used to distinguish groups of type $E_8$ from each other and for stating some results about such groups.  But it remains a coarse tool.  For example: \emph{Given a field $F$, what are the groups of type $E_8$ that are in the kernel of the Rost invariant?}  For this we have little to offer beyond the special fields discussed in \S\ref{fields.sec} and the following.

\begin{thm}[Tits-Chernousov]
Suppose there is an \underline{odd} prime $p$ such that every finite extension of $F$ has degree a power of $p$.  Then the kernel of the Rost invariant $r_{E_8} \!: H^1(F, E_8) \to H^3(F, \Q/\Z(2))$ is zero.
\end{thm}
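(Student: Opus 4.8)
The plan is to reduce the statement to known vanishing results over $p$-special fields for the proper subgroups of $E_8$ appearing in its subsystem subgroups, via a devissage on the possible Tits indices. Since $F$ is $p$-special for an odd prime $p$, the only relevant torsion is $p$-torsion (for $p \ge 7$ the group $E_8$ has no $p$-torsion at all, so $H^1(F,E_8)=0$ follows from \cite{St:reg}-type arguments and the statement is trivial; the real content is $p = 3$ and $p = 5$). So assume $p \in \{3,5\}$. Let $\xi \in H^1(F, E_8)$ with $r_{E_8}(\xi) = 0$, and let $G = {}^\xi E_8$ be the corresponding twisted group; we must show $\xi$ is trivial, equivalently $G$ is split.

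First I would pin down the isotropy of $G$ using the Rost invariant vanishing together with the structure theory recalled in \S\ref{rost.sec}. For $p = 5$: by Example \ref{A4A4} there is a subgroup $(\SL_5\times\SL_5)/\mu_5$ whose normalizer accounts for the $5$-torsion, and over a $5$-special field the map $H^1(F,\mu_5)^{\times 2}\times H^1(F,\Z/5) \to H^1(F,E_8)$ is surjective; composing with $r_{E_8}$ gives (up to sign) the cup product into $H^3(F,\Z/5(2))$ (Example \ref{rost.mod5}, citing \cite{GiQueg}). Hence $r_{E_8}(\xi)=0$ forces the class to come from something with vanishing symbol, and a norm-principle/specialization argument then shows $\xi$ is split — this is essentially Chernousov's argument in \cite{Ch:mod5} (alternatively \cite[15.5]{G:lens}). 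For $p = 3$: here one uses the chain $F_4 \subset E_6 \subset E_8$ and Example \ref{rost.mod3}: $20 r_{E_8}(\xi)$ lives in $H^3(F,\Z/3(2))$, and $r_{E_8}(\xi)=0$ makes it zero; by \cite{GPS} (cf.\ \cite{Ch:mod3}, \cite[10.22]{GPS}) a form of $E_8$ with trivial mod-$3$ Rost invariant over a $3$-special field, already known to have no exotic mod-$3$ phenomena, must have Tits index with enough circles to force splitting — one runs through the (short, since $F$ is $3$-special) list of possible indices and checks that each nonsplit one forces a nonzero residue of the Rost invariant in some $\Z/3(2)$-cohomology group, a contradiction.

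The uniform way to organize both cases is the one indicated in \cite[\S III.2]{Gille:sc}: over a $p$-special field one knows $H^1(F,G)=0$ for every semisimple simply connected $G$ of \emph{smaller} rank than $E_8$ whose torsion primes are $\le p$ (this is Gille's theorem, which is exactly the inductive input), so if $G$ acquires a parabolic $P$ with Levi $L$, then $H^1(F, E_8)$ localizes around $H^1(F, L^{\mathrm{sc}})$ and the associated semisimple anisotropic kernel, all of which are handled by induction; thus it suffices to prove $G$ is \emph{isotropic}. Isotropy is forced by producing, from the vanishing of $r_{E_8}(\xi)$, a nontrivial reductive subgroup of $G$ containing a split torus: concretely one uses that $r_{E_8}(\xi)$ restricted along $G_2\times F_4 \hookrightarrow E_8$ is $r_{G_2}+r_{F_4}$, and over a $p$-special field with $p$ odd the $G_2$ factor is inert (no $2$-torsion seen), so the whole class is accounted for by an $F_4$ or $E_6$ subquotient whose own Rost-kernel vanishing is classical (\cite{Ch:mod3} for $F_4$ mod $3$; \cite{Ch:mod5}/\cite{G:lens} for the $\mu_5^3$ piece).

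The main obstacle will be the $p=3$ isotropy step: showing that $r_{E_8}(\xi)=0$ over a $3$-special $F$ actually forces $G$ to be isotropic rather than merely forcing $20 r_{E_8}(\xi)$ to vanish. This requires controlling the motivic decomposition of the $E_8$-flag varieties with $\F_3$-coefficients — exactly the $J$-invariant / $J_3(G)$ machinery of \cite{PSZ:J} invoked in Example \ref{mod3.symbol} — to rule out the anisotropic-kernel-of-type-$E_6$-or-$F_4$ possibilities, after which \cite{GPS} finishes. Everything else (the $p=5$ case via cup products, the $p\ge 7$ triviality, and the final parabolic-reduction bookkeeping) is comparatively routine once the isotropy is in hand.
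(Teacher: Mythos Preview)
Your proposal lands on the same case split ($p \ge 7$ trivial via torsion-prime considerations \`a la Tits; $p = 5$ via the $\mu_5^{\times 3}$ subgroup and cup products, citing \cite{Ch:mod5} or \cite[15.5]{G:lens}; $p = 3$ via the $F_4 \subset E_6 \subset E_8$ chain and \cite{Ch:mod3} or \cite[10.22]{GPS}) and the same references as the paper, whose own proof is nothing more than those citations.  So at the level the paper operates, you have it.

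However, your attempted elaboration contains a genuine error.  You assert that ``over a $p$-special field one knows $H^1(F,G)=0$ for every semisimple simply connected $G$ of smaller rank than $E_8$ whose torsion primes are $\le p$'', and call this Gille's theorem and your inductive input.  This is false as stated: for $p = 3$, the group $F_4$ has torsion primes $\{2,3\}$, yet over a $3$-special field $H^1(F, F_4)$ can be nonzero --- Albert division algebras from the first Tits construction give nontrivial classes (cf.\ Example~\ref{rost.mod3}).  What is true, and what Chernousov's arguments actually use, is vanishing of the \emph{kernel of the Rost invariant} for the smaller groups, not vanishing of all of $H^1$.  The parabolic/Levi reduction step therefore needs to carry the Rost-kernel hypothesis through the d\'evissage, which requires compatibility of Rost invariants under Levi inclusions --- a point you elide.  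Relatedly, your appeal to \cite[\S III.2]{Gille:sc} is misplaced: that reference is invoked in the paper in the Serre Conjecture~II context (cohomological dimension $\le 2$), not for arbitrary $p$-special fields.

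A second, smaller confusion: the paragraph where you ``concretely'' use that $r_{E_8}(\xi)$ restricts along $G_2 \times F_4 \hookrightarrow E_8$ to $r_{G_2} + r_{F_4}$ is a non sequitur.  That formula computes the Rost invariant for classes \emph{in the image of} Tits's construction, but a general $\xi \in H^1(F, E_8)$ need not lie in that image (indeed the paper emphasizes this in \S\ref{rost.sec}).  The actual arguments for $p=3,5$ do not go through Tits's construction; for $p = 5$ the surjectivity you correctly quote from Example~\ref{rost.mod5} already bypasses it, and for $p = 3$ the route is through the Tits index and motivic/$J$-invariant machinery, as you acknowledge in your final paragraph.
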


\begin{proof}
This was observed for $p \ge 7$ in \cite{Ti:deg}, with only the $p = 7$ case being nontrivial.    The cases $p = 5, 3$ are the main results of \cite{Ch:mod5} and \cite{Ch:mod3}, see also \cite[Prop.~15.5]{G:lens} for $p = 5$ and \cite[Prop.~10.22]{GPS} for $p = 3$.
\end{proof}

The prime 2 is omitted in the theorem because the conclusion is false in that case, as can be seen already for $F = \R$, where the compact $E_8$ has Rost invariant zero.  Recently, Semenov produced a newer, finer invariant in \cite{Sem:mot}.  Put $H^1(F, E_8)_{15}$ for the kernel of $15r_{E_8}$, a subset of $H^1(F, E_8)$.

\begin{thm}[Semenov] \label{semenov}
If $\car F = 0$, there exists a nonzero invariant
\[
s_{E_8} \!: H^1(*, E_8)_{15} \to H^5(*, \Z/2)
\]
such that $s_{E_8}(G) = 0$ if and only if $G$ is split by an odd-degree extension of $F$.
\end{thm}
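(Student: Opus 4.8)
The plan is to realize $s_{E_8}$ as a \emph{secondary} mod-$2$ invariant, obtained by reading off a degree-$5$ symbol from the Chow motive, with $\Z/2$-coefficients, of a carefully chosen twisted flag variety of $G$ --- the ``motivic construction of invariants'' alluded to in the cited reference. The first point is a reduction: for $G\in H^1(F,E_8)_{15}$ the hypothesis $15\,r_{E_8}(G)=0$ says exactly that $r_{E_8}(G)$ lies in $H^3(F,\Z/3(2))\oplus H^3(F,\Z/5(2))$, so the $2$-primary part of the Rost invariant has already vanished and there is room for a finer mod-$2$ invariant, just as the Arason invariant $e_4$ lives on the kernel of $e_3$ for quadratic forms. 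Since a group of type $E_8$ is split by some odd-degree extension of $F$ if and only if it is split over the maximal odd-degree extension $F'$ (the fixed field of a Sylow pro-$2$ subgroup of the absolute Galois group of $F$, which contains every finite odd-degree subextension), and since for any odd-degree $L/F$ the restriction $H^5(F,\Z/2)\to H^5(L,\Z/2)$ is injective by transfer, it is enough to build an invariant over $F'$ detecting splitness there and then descend.

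For the construction, fix a projective homogeneous $G$-variety $X$ chosen to witness the $2$-primary anisotropy of $G$ --- for instance the variety of parabolics conjugate to the one of type $\alpha_1$, so that over a splitting field $X\cong E_8/P_1$ with semisimple Levi part $\Spin_{16}/\mu_2$ as in Example~\ref{D8} --- and pass to its motive $M(X)$ with $\Z/2$-coefficients. By the theory of upper motives, $M(X)$ has a canonical indecomposable summand $\mathcal{U}(G)$ containing the unit Tate motive, and $\mathcal{U}(G)$ depends only on $G$, not on which $2$-anisotropic flag variety of $G$ one picks. The heart of the argument is to show that, under the standing hypothesis $15\,r_{E_8}(G)=0$, the summand $\mathcal{U}(G)$ is a \emph{generalized Rost motive} $\mathcal{R}(\beta)$ attached to a pure symbol $\beta\in H^5(F,\Z/2)$ --- concretely the $\Z/2$-motive of a norm variety of a $5$-fold Pfister form --- and that $\mathcal{R}(\beta)$ remembers $\beta$. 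I would prove this by (i) invoking the classification of the possible $J$-invariants $J_2(G)$ of groups of type $E_8$ and of the resulting motivic decompositions of $X$ from \cite{PSZ:J}; (ii) using compatibility of the Rost invariant with motivic decompositions, together with Rost's degree formula and the hypothesis on $r_{E_8}(G)$, to eliminate every case except the one whose upper piece is $\mathcal{R}(\beta)$ with $\beta$ in degree $5$; and (iii) appealing to the norm-variety machinery of Voevodsky and Rost --- this is where $\car F=0$ genuinely enters, through resolution of singularities --- to identify $\mathcal{U}(G)$ with the motive of an honest norm variety and extract $\beta$. Then one sets $s_{E_8}(G):=\beta$. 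Functoriality in $F$ is immediate from base change of motives and uniqueness of upper motives, and $s_{E_8}$ is nonzero because the compact real form of $E_8$ over $\R$ has $r_{E_8}=0$, hence lies in $H^1(\R,E_8)_{15}$, is anisotropic, and has no nontrivial odd-degree splitting field, so by the detection property it must have $s_{E_8}\neq 0$ in $H^5(\R,\Z/2)=\Z/2$.

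For detection: if $G$ is split by an odd-degree $L/F$ then $s_{E_8}(G_L)=0$, hence $s_{E_8}(G)=0$ by injectivity of restriction. Conversely, if $s_{E_8}(G)=0$ then $\mathcal{U}(G)=\mathcal{R}(0)$ is split, so $X$ carries a zero-cycle of degree prime to $2$; base-changing to $F'$, where ``odd degree'' means degree one, $X$ acquires a rational point, and over $F'$ the Rost invariant $r_{E_8}(G_{F'})$ --- which is odd-torsion and is controlled, via the subgroups $F_4\subset E_8$ and $(\SL_5\times\SL_5)/\mu_5\subset E_8$ of Examples~\ref{rost.mod3} and~\ref{rost.mod5}, by an Albert algebra and by symbols whose odd-degree splitting fields are already contained in $F'$ --- vanishes. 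With $r_{E_8}$ trivial over $F'$, the splitting of the relevant upper motives forces $G_{F'}$ to be split, and therefore $G$ is split by an odd-degree extension of $F$.

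The main obstacle is steps (ii)--(iii) of the construction: proving that the upper motive $\mathcal{U}(G)$ is \emph{precisely} a generalized Rost motive attached to a degree-$5$ symbol and not some larger indecomposable piece. This drives one through the full apparatus of motivic decompositions of twisted flag varieties (Chow rings of the $E_8$-flag varieties, the $J$-invariant of \cite{PSZ:J}), Rost's degree formula to pin the symbol down, and the Bloch--Kato and norm-variety theorems of Voevodsky and Rost --- which is exactly where the characteristic-zero hypothesis is used. A subsidiary delicate point is the bookkeeping in the converse half of the detection statement: separating the honestly $2$-primary content of $G$ from the $3$- and $5$-primary content of its Rost invariant, and checking that the latter is always annihilated after an odd-degree extension.
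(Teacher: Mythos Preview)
The paper does not give a proof of this theorem; it is stated as a result of Semenov with a reference to \cite{Sem:mot}.  So there is no ``paper's own proof'' to compare against.  Your outline is in the right spirit --- Semenov's construction is indeed motivic, via the $J$-invariant of \cite{PSZ:J} and generalized Rost motives --- but as written there is a real gap in the detection direction, and one piece of reasoning is muddled.

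First the muddled piece.  Over the $2$-special closure $F'$, the vanishing of $r_{E_8}(G_{F'})$ is immediate: by hypothesis $r_{E_8}(G)$ lies in the $3$- and $5$-primary parts of $H^3$, and for a pro-$2$ absolute Galois group one has $H^i(F',\Z/n(j))=0$ for odd $n$ and $i>0$.  Invoking Albert algebras and $(\SL_5\times\SL_5)/\mu_5$ here is irrelevant and obscures a one-line argument.

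The genuine gap is the converse implication.  From $s_{E_8}(G)=0$ you obtain that the upper motive $\mathcal{U}(G)$ with $\Z/2$-coefficients is trivial, hence your chosen flag variety $X=G/P_1$ acquires a zero-cycle of odd degree and so a rational point over $F'$.  But a rational point on $G/P_1$ only says that $G_{F'}$ has a parabolic of type $\alpha_1$, i.e.\ that $G_{F'}$ is \emph{isotropic}; it does not say $G_{F'}$ is split.  The sentence ``the splitting of the relevant upper motives forces $G_{F'}$ to be split'' is exactly the hard content and cannot be asserted.  What Semenov actually does is analyze the possible values of $J_2(G)$ over a $2$-special field with trivial Rost invariant, show that the only non-split possibility corresponds to a binary motive of the Rost type (this is your step (ii), which you rightly flag as the main obstacle but do not carry out), and then argue that triviality of the associated symbol forces \emph{every} entry of $J_2(G)$ to vanish, which is equivalent to splitness of $G$ over the $2$-special field.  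Reducing to a single flag variety and a single rational point, as you do, loses the information needed for that last step; one has to control the whole $J$-invariant, not just produce one parabolic.
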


For the case $F = \R$, $s_{E_8}$ is defined only for the split and compact real forms, and it is nonzero on the compact form.

This style of invariant, which is defined only on a kernel of another invariant, is what one finds for example in quadratic form theory.  The Rost invariant $H^1(*, \Spin_n) \to H^3(F, \Z/2)$ amounts to the so-called Arason invariant $e_3$ of quadratic forms.  There is an invariant $e_4$ defined on the kernel of $e_3$ with values in $H^4(F, \Z/2)$, an invariant $e_5$ defined on the kernel of $e_4$ with values in $H^5(F, \Z/2)$, and so on, see \cite{EKM}. 

\begin{eg} 
Let $G$ be a group of type $E_8$ arising from Tits's construction as in Example \ref{Tits.rkill}.  Then $15r_{E_8}(G) = 0$ if and only if $\phi_3 = \gamma_3$ in the notation of that example.  In that case, $s_{E_8}(G)$ is the class  $e_5(\phi_5) \in H^5(F, \Z/2)$, see \cite[Th.~3.10]{GS:deg5}.
\end{eg}

It is natural to wonder what other cohomological invariants of $E_8$ may exist. 

\begin{ques} \cite[p.~1047]{RY} Do there exist nonzero  invariants mapping $H^1(*, E_8)$ into $H^5(*, \Z/3)$ and $H^9(*, \Z/2)$?
\end{ques}

The parameters in the question are based on the existence of certain non-toral elementary abelian subgroups in $E_8(\C)$.  (Such subgroups are described in \cite{Griess:el}.)

\section{Witt invariants}

We introduced the notion of $A$-invariant as a tool for distinguishing groups of type $E_8$.  In the previous two sections we discussed the case where $A$ was a Galois cohomology group, but one could equally well take $A$ to be the functor $W$ that sends a field $F$ to its Witt group of non-degenerate quadratic forms $W(F)$ as defined in, for example, \cite{EKM} or \cite{Lam}.  In this section, for ease of exposition, we assume that $\car F \ne 2$.  In particular, $W(F)$ is naturally a ring via the tensor product.

\subsection*{Killing form}
For $G$ an algebraic group, put $\kappa(G)$ for the Killing form on the Lie algebra of $G$.  (If $\car k$ divides 60, then the Killing form is identically zero, and in that case one should define $\kappa(G)$ to be the reduced Killing quadratic form as defined in \cite{G:vanish}.  We elide the details here.) 
The function $G \mapsto \kappa(G)$ is a morphism of functors $H^1(*, E_8) \to W(*)$, and we define
\[
f \!: H^1(*, E_8) \to W(*) \quad \text{via} \quad G \mapsto \kappa(G) - \kappa(E_8)
\]
to obtain a $W$-invariant that sends the distinguished class to 0.

The Witt ring has a natural filtration by powers of the ``fundamental'' ideal $I$ generated by even-dimensional quadratic forms.  It is natural to ask: \emph{What is the largest $n$ such that $f(G)$ belongs to $I^n$?}  

\begin{eg}
In case $G$ arises by Tits's construction, the formula in Example \ref{Tits.rkill} shows that $f(G) \in I^5$.  Concretely we can see this over the real numbers, where the signature defines an isomorphism $W(\R) \cong \Z$ that identifies $I^5$ with $32\Z$, and Table \ref{real.table} shows that $f(G) \in \{ 0, -32, -248 \}$
\end{eg}

The action of the split group $E_8$ on its Lie algebra $\e_8$ is by Lie algebra automorphisms, so it preserves the Killing form $\kappa(E_8)$, giving a homomorphism $E_8 \to O(\kappa(E_8))$.  As $E_8$ is simply connected, this lifts to a homomorphism $E_8 \to \Spin(\kappa(E_8))$.  By Galois descent, it is clear that the image of a group $G$ under the map $H^1(F, E_8) \to H^1(F, O(\kappa(E_8))$ is $\kappa(G)$, whence $f(G)$ necessarily belongs to $I^3$ because the map factors through $H^1(F, \Spin(\kappa(E_8)))$ \cite[p.~437]{KMRT}.

In fact, we cannot do better than the exponent in the previous paragraph, because: 
\emph{$f(G)$ belongs to $I^4$ iff $30r_{E_8}(G) = 0$}.  To see this, we observe on the one hand that $f(G)$ belongs to $I^4$ if and only if $f(G)$ is in the kernel of the Arason invariant $e_3 \!: I^3(*) \to H^3(*, \Z/2)$ by \cite{MSus:K3} or \cite{Rost:H90}, i.e., the Rost invariant $r_{\Spin(\kappa(E_8))}$ vanishes on the image of $G$.  On the other hand, the map $E_8 \to \Spin(\kappa(E_8))$ has Dynkin index 30, so composing it with $r_{\Spin(\kappa(E_8))}$ gives $30r_{E_8}$ by \cite[p.~122]{GMS}.  This completes the proof of the claim.  A versal group $G$ of type $\E_8$ has $30r_{E_8}(G) \ne 0$, so $f(G) \in I^3 \setminus I^4$.

Inspired by \cite{Selet}, we may still ask: \emph{If $r_{E_8}(G)$ has odd order, is it necessarily true that $f(G) \in I^8$?}  If the answer is yes and $\car F = 0$, one can ask for more: \emph{Does Semenov's invariant $s(G) \in H^5(F, \Z/2)$ divide $e_8(f(G)) \in H^8(F, \Z/2)$?}  The answer to both of these questions is ``yes'' for groups arising from Tits's construction, by Example \ref{Tits.rkill}.

\subsection*{Witt invariants in general}
Here is a natural way to construct invariants with values in the Witt ring, i.e., morphisms of functors $H^1(*, E_8) \to W(*)$.  For each dominant weight $\la$ of $\E_8$, there is a Weyl module $V(\la)$ with highest weight $\la$ defined for $\e_{8,\Z}$, that has an indivisible, $E_{8,\Z}$-invariant quadratic form $q_\Z$ on it, uniquely determined up to sign.  Base change to a field $P$ gives a quadratic form $q_\Z \ot P$ on $V(\la) \ot P$ that has a radical, namely the unique maximal proper submodule of $V(\la) \ot P$, hence we find a quadratic form on the irreducible quotient of $V(\la) \ot P$, call the form $q_{\la,P}$.  Applying $H^1$, we obtain a function $H^1(P, E_8) \to H^1(P, O(q_{\la,P}))$, and therefore for every group $G$ of type $E_8$ over $P$, we obtain a corresponding quadratic form we denote by $q_\la(G)$.  For example, the ``constant'' invariant that sends every $G$ to the 1-dimensional quadratic form $x \mapsto x^2$ can be viewed as $q_0$, and the invariant $\kappa$ can be viewed as $q_{\omega_8}$.

\begin{ques}
\emph{For every field $K$ of characteristic different from $2$, $W(K)$ is a ring, so the set of invariant $H^1(*, E_8) \to W(*)$ is a ring.}  Do $W(F)$ and the $q_\la$, as $\la$ varies over the dominant weights, generate the ring of invariants?
\end{ques}


\section{Connection with division algebras}

Groups of type $E_8$ are closely related to the smallest open cases of two of the main outstanding problems in the study of (associative) division algebras.  To recall the terminology, the center of a division ring $D$ is a field, call it $F$.  We say that $D$ is a division algebra if $\dim_F D$ is finite, in which case $\dim_F D$ is a square, and its square root is called the \emph{degree} of $D$.  Hamilton's quaternions are an example of such a $D$ with $F = \R$ and $\deg D = 2$.

One knows that, among all fields $K$ such that $F \subset K \subset D$, the maximal ones always have $\dim_F K = \deg D$.  In case there exists a maximal $K$ that is Galois over $F$ (resp., Galois over $F$ with cyclic Galois group), one says that $D$ is \emph{a crossed product} (resp., is \emph{cyclic}).  For a crossed product, one can write down a basis and multiplication rules in a relatively compact way, and the description is even simpler if $D$ is cyclic.

Every division algebra $D$ with $\deg D = 2$ or $3$ is known to be cyclic, and the principal open problem in the theory of division algebras is: \emph{If $\deg D = p$ for a prime $p \ge 5$, must $D$ be cyclic?}  Philippe Gille showed that this question, for the case $p = 5$, can be rephrased as a statement about groups of type $E_8$, see \cite{Gille:E8}.

Along with the dimension, another property of a division algebra $D$ is its \emph{period}, which is the smallest number $p$ such that a tensor product of $p$ copies of $D$ is isomorphic to a matrix algebra over $F$.  (It is a basic fact that $p$ divides $\deg D$ and that the two numbers have the same prime factors.)  Another open problem about division algebras is: \emph{Determine whether every division algebra $F$ of period $p$ and degree $p^2$ is a crossed product.}  For period 2 and degree $2^2$, the answer is ``yes'' and is due to Adrian Albert.  (It is also ``yes'' for period $2$ and degree  $2^3$ by Louis Rowen \cite{Jac:fd}.)  The smallest open case, then, is where $p = 3$, and this is where $E_8$ may play a role.  As in \cite{Vinberg:tri}, using the $\Z/3$-grading on $\e_8$ from Example \ref{A8E8}, we find that $\SL_9/\mu_3$ acts transitively on certain 4-dimensional subspaces of $\wedge^3(F^9)$.  This gives a surjection in Galois cohomology $H^1(F, N) \to H^1(F, \SL_9/\mu_3)$ for some subgroup $N$ of $\SL_9/\mu_3$ and one can hope that analyzing this surjection would give insight into whether algebras of degree 9 and period 3 are crossed products.  See \cite{LoetscherMacD} for more discussion of this general setup and \cite{G:lens} for examples where similar surjections are exploited.

\section{Other recent results on $E_8$}

\subsection*{Torsion index} Grothendieck \cite{Groth:tor} defined an invariant of a compact Lie group $G$ called the \emph{torsion index}, which has interpretations for the Chow groups and motivic cohomology of the classifying space of $BG$ as well as for the \'etale cohomology of torsors under analogues of $G$ over arbitrary fields, see \cite{Tot:spin} for precise statements.  Jacques Tits's brief paper \cite{Ti:deg} contained a proof that the torsion index for the compact $E_8$ is divisible by $60 = 2 \cdot 3 \cdot 5$ and divides $2^9 \cdot 3^2 \cdot 5$, and wrote that one could hope (``hypoth\'ese optimiste !'') that the correct answer is 60.  In \cite{Tot:E8} and \cite{Tot:E8tor}, Burt Totaro proved that it is $2^6 \cdot 3^2 \cdot 5$.

\subsection*{Kazhdan-Lusztig-Vogan polynomials} Some readers will remember a flurry of news coverage in 2007 about a ``calculation the size of Manhattan''.  This referred to the calculation of the Kazhdan-Lusztig-Vogan polynomials for the split real forms of simple Lie algebras, where the final step was for $\e_{8,\R}$.  For more on this see \cite{Vogan} and \cite{Leeuwen:Vogan}.  We have also omitted the related topic of infinite-dimensional unitary representations of $E_8$, for constructions of which see example  \cite{KazhdanSavin}, \cite{GrossWallach}, and \cite{BrylinskiKostant}.

\subsection*{Finite simple subgroups} We now know which finite simple groups embed in the (infinite) simple group $E_8(\C)$, or more generally $E_8(k)$ for $k$ algebraically closed, see \cite{Se:sgf}, \cite{GriessRyba:survey}, and \cite{CohenWales:survey} for surveys. 
One would like to know also how many conjugacy classes there are for each of these finite subgroups, for which the interesting case of the alternating group on 5 symbols was resolved in \cite{Lusztig:A5}.  These questions can be viewed as a case of a natural generalization of the classification of the finite simple subgroups of $\SO(3)$, which amounts to a classification of the Platonic solids as in \cite[Chap.~I]{Klein:ico}.  Specifically, the Platonic solids correspond to the embeddings of the alternating group on 4 or 5 letters, $\PSL(2,3)$ and $\PSL(2,5)$, and the symmetric group on 5 letters, $\PGL(2,3)$, in $\SO(3)$, and these embeddings are part of a series of embeddings of such subgroups in simple Lie groups including the case of $E_8(k)$, see  \cite{Se:plon} and \cite{Se:sgf}.

\subsection*{Vanishing of trace forms} For a representation $\rho \!: \e_{8,F} \to \gl(V)$ for some $V$, the map $b_\rho \!: (x, y) \mapsto \Tr(\rho(x) \rho(y))$ defines an $\e_{8,F}$-invariant symmetric bilinear form on $\e_{8,F}$.  (When $\rho$ is the adjoint representation, $b_\rho$ is the Killing form.)  In the 1960s, motivated by then-current approaches to studying Lie algebras over fields of prime characteristic, it was an open problem to determine whether $b_\rho$ is identically zero for all $\rho$ when $\car F = 5$, see \cite[p.~48]{Sel:mod}, \cite[p.~554]{Block:trace}, or \cite[p.~544]{BlockZ}.
It is indeed always zero, and for $\car F = 2, 3$ as well, see  \cite{G:vanish}.

\subsection*{Essential dimension}
The essential dimension $\ed(G)$ of an algebraic group $G$ is a non-negative integer that declares, roughly speaking, the number of parameters needed to specify a $G$-torsor, see \cite{Rei:what}, \cite{Rei:ICM}, or \cite{M:ed} for a formal definition and survey of what is known.  In the case of $E_8$, this equals the number of parameters needed to specify a group of type $E_8$.  So far, we only know bounds on $\ed(E_8)$ and the bounds we know are quite weak. 
 Specifically, over $\C$ we have
\[
9 \le \ed(E_8) \le 231,
\]
where the lower bound is from \cite{GiRei} and the upper bound is from \cite{Lemire}.  The distance between the upper and lower bounds is remarkable.  In contrast, for the other simply connected exceptional groups over $\C$, one knows by \cite{MacD:F4} and \cite{LoetscherMacD} that 
\[
\ed(G_2) = 3, \quad 5 \le \ed(F_4) \le 7, \quad 4 \le \ed(E_6) \le 8, \quad \text{and} \quad 7 \le \ed(E_7) \le 11,
\]
which are all much closer.
Determining $\ed(E_8)$ will require new techniques.

\subsection*{Yet more topics} We have furthermore omitted any discussion of:
\begin{itemize}
\item Relations with vertex operator algebras as in \cite{FFR:VOA} and \cite{FLM:VOA}.
\item The adjoint representation of $E_8$ is in some sense unique among irreducible representations of simple algebraic groups, in that it is the only non-minuscule standard module that is irreducible in all characteristics, see \cite{GGN}.
\item The Kneser-Tits Problem as described in \cite{Gille:KT}.  One of the remaining open cases for $E_8$ was recently settled in \cite{PTW}, but another rank 2 case remains.
\item Affine buildings with residues of type $E_8$ as in \cite{MPW:descent}.
\end{itemize}


\bibliographystyle{amsalpha}
\bibliography{skip_master}

\end{document}